\theoremstyle{plain}
\newtheorem{theorem}{Theorem}
\newtheorem{lemma}{Lemma}
\newtheoremstyle{slplain}% name
  {.5\baselineskip\@plus.2\baselineskip\@minus.2\baselineskip}% Space above
  {.5\baselineskip\@plus.2\baselineskip\@minus.2\baselineskip}% Space below
  {\slshape}% Body font
  {}%Indent amount (empty = no indent, \parindent = para \title{Mathematical Theorems}indent)
  {\bfseries}%  Thm head font
  {.}%       Punctuation after thm head
  { }%      Space after thm head: " " = normal interword space;
\newcommand{\setB}{\mathcal{B}(n,r)}
\newcommand{\setS}{\mathfrak{S}}
\DeclareMathOperator{\maj}{maj}
\DeclareMathOperator{\des}{des}
\begin{document}

\title{On super Catalan polynomials}    

\author{Emily\ Allen \thanks{Department of Mathematical Sciences, Carnegie Mellon University, Pittsburgh, PA 15213, {\tt <eaallen@andrew.cmu.edu>}}\\
 Irina\ Gheorghiciuc \thanks{Department of Mathematical Sciences, Carnegie Mellon University, Pittsburgh, PA 15213, {\tt <gheorghi@andrew.cmu.edu>}}}
\date{}

\maketitle

\begin{abstract}
We present a $q$-analog of the super Catalan number $(2m)!(2n)!/2m!n!(m+n)!$, which also generalizes the $q$-Catalan numbers $c_n(\lambda)$, due to F\"urlinger and Hofbauer, for $\lambda=0$ and $\lambda=1$. We give a combinatorial interpretation for this analog when $m=2$.
\end{abstract}

\section{Introduction}

In \cite{SuperBallot} Gessel reintroduces the integers 
\[ S(m,n) = \frac{{2m \choose m}{2n \choose n}}{{m + n \choose n}} = \frac{(2m)!(2n)!}{m!n!(m+n)!},\]
which were studied by Eugene Catalan in 1874 \cite{cat}. When $m,n>0$, the numbers $S(m,n)$ are even. Gessel and Xin refer to $T(m,n) = S(m,n)/2$ as the super Catalan numbers \cite{GesselXin}. They gave a combinatorial interpretation of $T(2,n)$ in terms of pairs of Dyck paths with restricted heights. The set of pairs of Dyck paths considered by Gessel and Xin in \cite{GesselXin} is related to the Dyck paths in Theorem. 

We will use the standard $q$-notation
\[[r]_q = 1 + q + \cdots + q^{r-1} \qquad \text{ and } \qquad [n]!_q = \prod_{r=1}^n [r]_q.\]
The polynomials
\[S_q(m,n) = \frac{[2m]!_q[2n]!_q}{[m]!_q[n]!_q[m+n]!_q}\] 
have been studied by Warnaar and Zudilin \cite{warnaar}, and by Allen \cite{Emily}. Warnaar and Zudilin proved that $S_q(m,n)$ are polynomials with nonnegative integer coefficients \cite{warnaar}. Allen conjectured that $S_q(m,n)$ are unimodal \cite{Emily}.

Let $\mathfrak{S}(m,n)$ denote the set of lattice paths that begin at the origin, have $m$ \textit{up} steps (drawn by ascending edges) and $n$ \textit {down} steps (drawn by descending edges). Let $\mathfrak{S}_+(m,n)$ denote the subset of those paths which never go below the $x$-axis. We will refer to $\mathfrak{S}_+(n,n) = \mathcal{C}_n$ as the set of Catalan paths of length $2n$. The height of $\pi \in \mathfrak{S}(m,n)$, which is the maximum level $\pi$ reaches, will be denoted by $h(\pi)$. A path of length $\ell$ can be represented as a sequence of zeros and ones, $\pi = \pi_1\cdots \pi_{\ell}$, where zeros represent \textit{up} steps and ones represent \textit{down} steps.

For a lattice path $\pi$ we define the descent set $D(\pi)$, the major index $\maj(\pi)$, and the descent index $\des(\pi)$ to be
\[D(\pi) = \{i : \pi_i > \pi_{i+1}, 1 \leq i \leq \ell - 1\},\]
\[\maj(\pi) = \sum_{i \in D(\pi)} i ,\]
\[\des(\pi) = |D(\pi)|.\]

A combinatorial interpretation of $S_q(0,n)$ was given by MacMahon \cite{MacMahon}
\[S_q(0,n) = \Big[\big.^{2n}_n\big.\Big]_q = \sum_{\pi \in \mathfrak{S}(n,n)} q^{\maj(\pi)}.\]
Allen showed that $T_q(m,n) = S_q(m,n)/(1+q^n)$ is a polynomial \cite{Emily}. In fact \[T_q(1,n) = \sum_{\pi \in \mathfrak{S}_+(n,n)} q^{\maj(\pi) - \des(\pi)} \,\,\,\,\,\, \mbox{and} \,\,\,\,\,\,  T_q(n,1)  = \sum_{\pi \in \mathfrak{S}_+(n,n)} q^{\maj(\pi)} \] are respectively $q$-Catalan numbers $c_n(0)$ and $c_n(1)$ defined by F\"urlinger and Hofbauer \cite{fh}.

The super Catalan numbers satisfy the following identity, attributed to Dan Rubenstein \cite{SuperBallot}
\begin{equation}\label{rec}
 4T(m,n) = T(m+1,n) + T(m,n+1).
\end{equation}

The following $q$-analog of this identity holds
\begin{equation}(1+q^n)(1+q^{n-m})T_q(m,n) = q^{n-m}T_q(n,m+1) + T_q(m,n+1).
\end{equation}

In Section 2 we provide several results on $q$-Ballot Numbers. In Section 3 we expand on our methods in \cite{art1} to give a combinatorial interpretation of $T_q(2,n)$.

\section{A $q$-Ballot Number}

Let $\setB$ denote the set of paths of length $2n$ which begin at the origin with an \textit{up} step, end at $(2n,-2r+2)$, and never go below the line $y=-2r+2$. In particular $\mathcal{B}(n,1) = \mathcal{C}_n$, the set of Catalan paths of length $2n$. 

Define the $q$-Ballot Number
\[B_q(n,r) = \frac{[2n-1]!_q[2r]_q}{[n+r]_q![n-r]_q!} = \frac{1}{q^{n-r}}\left( \Big[\big.^{2n-1}_{n+r-1}\big.\Big]_q - \Big[\big.^{2n-1}_{n+r}\big.\Big]_q\right). \]

A different $q$-Ballot Number has been defined by Chapoton and Zeng in \cite{chapoton}.

 \begin{lemma}\label{reflect} Let $\pi \in \setS(m,n)$ ending with an \textit{up} step. Reflecting $\pi$ over the $x$-axis gives a path $\rho \in \setS(n,m)$ ending with a \textit{down} step which satisfies $\maj(\pi) = \maj(\rho) + n$.
 \end{lemma}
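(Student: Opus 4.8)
The plan is to pass to the binary-word encoding $\pi = \pi_1 \cdots \pi_\ell$ with $\ell = m+n$, where $0$ denotes an up step and $1$ a down step. Reflection over the $x$-axis interchanges up and down steps, so it is exactly bitwise complementation: $\rho = \bar\pi$ has $n$ zeros and $m$ ones, whence $\rho \in \setS(n,m)$; and since $\pi$ ends with an up step, $\pi_\ell = 0$, so $\rho_\ell = 1$ and $\rho$ ends with a down step, as asserted. Everything then reduces to the numerical identity $\maj(\pi) = \maj(\rho) + n$.

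First I would pin down the two descent sets. For $1 \le i \le \ell - 1$ we have $i \in D(\pi)$ iff $\pi_i = 1,\ \pi_{i+1} = 0$, and $i \in D(\rho) = D(\bar\pi)$ iff $\pi_i = 0,\ \pi_{i+1} = 1$. Hence $D(\pi)$ and $D(\rho)$ are disjoint, their union is the set of indices where $\pi$ changes value, and for every index $i$ one has $\pi_i - \pi_{i+1} = 1$ when $i \in D(\pi)$, $\pi_i - \pi_{i+1} = -1$ when $i \in D(\rho)$, and $\pi_i - \pi_{i+1} = 0$ otherwise. Therefore
\[ \maj(\pi) - \maj(\rho) = \sum_{i \in D(\pi)} i \;-\; \sum_{i \in D(\rho)} i \;=\; \sum_{i=1}^{\ell - 1} (\pi_i - \pi_{i+1})\, i . \]

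The final move is a summation by parts: reindexing and combining terms collapses the last sum to $\sum_{i=1}^{\ell-1} \pi_i - (\ell-1)\pi_\ell$. This is the one place where the hypothesis that $\pi$ ends with an up step is used, and it is decisive: $\pi_\ell = 0$ annihilates the boundary term $(\ell-1)\pi_\ell$, so we are left with $\sum_{i=1}^{\ell-1}\pi_i = \sum_{i=1}^{\ell}\pi_i$, which is just the number of down steps of $\pi$, namely $n$. That yields $\maj(\pi) - \maj(\rho) = n$, as required. I do not anticipate a real obstacle here: the identification of the descent sets is immediate from the definitions, and the only step needing a little care is the bookkeeping in the summation by parts, together with the remark that the boundary term disappears precisely because the last step is up. Equivalently, one can argue combinatorially: the indices where $\pi$ changes value, read left to right, alternate between members of $D(\pi)$ and members of $D(\rho)$, and because $\pi_\ell = 0$ the largest such index lies in $D(\pi)$; pairing them off then telescopes the difference of major indices to $n$.
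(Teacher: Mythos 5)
Your proof is correct, and it takes a somewhat different route from the paper's. The paper decomposes $\pi$ into the runs between consecutive descents $X_0=0, X_1,\ldots,X_{\ell}$, observes that the descents of the reflected path $\rho$ (the peaks of $\pi$) sit exactly at the displaced indices $X_i+u_i$, and then sums these, using the end-with-an-up-step hypothesis in the form $X_\ell+u_\ell=m+n$. You never need this explicit correspondence between $D(\pi)$ and $D(\rho)$: you encode reflection as complementation of the $0$--$1$ word, note that $\maj(\pi)-\maj(\rho)=\sum_{i=1}^{\ell-1} i(\pi_i-\pi_{i+1})$ since the indicator of a descent of $\pi$ minus that of a descent of $\rho$ is $\pi_i-\pi_{i+1}$, and then sum by parts, with the hypothesis $\pi_\ell=0$ entering only to kill the boundary term $(\ell-1)\pi_\ell$ and leave $\sum_i\pi_i=n$. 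What the paper's argument buys is the geometric picture (each valley of $\pi$ becomes a peak of $\rho$, shifted by the run of up steps), which is the content reused implicitly elsewhere; what yours buys is a cleaner bookkeeping that handles degenerate cases uniformly (empty descent set, or $\pi$ starting with a down step, where the paper's phrase ``descents of $\rho$ occur at $X_i+u_i$ for $i<\ell$'' technically lists the illegal index $0$ when $u_0=0$, harmless only because it contributes $0$ to the sum), and it isolates exactly where the last-step hypothesis is used. Your closing alternation/telescoping remark is also sound, though as stated it is a sketch; the Abel-summation computation is the complete argument.
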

 
 \proof{Given a path $\pi \in \setS(m,n)$ ending with an \textit{up} step, let $D(\pi) = \{X_1,\ldots, X_{\ell}\}$. We let $X_0=0$. Define $u_i$ and $d_i$ to be the number of \textit{up} and \textit{down} steps, respectively, between indices $X_i$ and $X_{i+1}$. Let $\rho$ be the reflection of $\pi$ across the $x$-axis. Then the descents of $\rho$ occur exactly at indices $X_i+u_i$ for $i < \ell$. Hence,
 
 $\maj(\rho) = \sum_{i=0}^{\ell - 1}( X_i + u_i) = \maj(\pi) + m - (X_{\ell} + u_{\ell}) = \maj(\pi) + m - (n+m) = \maj(\pi) - n$. 
}

\begin{theorem}\label{qballot}
 \begin{equation} B_q(n,r) = \sum_{\pi \in \setB} q^{\maj(\pi) - \des(\pi)}\end{equation}

\end{theorem}
 
 \begin{proof} 
 Let $\mathfrak{S}_{>}$ denote the set of paths in $\mathfrak{S}(n+r-1, n-r)$  which have height strictly greater than $2r-1$, and let $\mathfrak{S}_{\leq}$ denote the set of paths in $\mathfrak{S}(n+r-1,n-r)$ which never go above $y=2r-1$. 
 
 We will define a bijection $\psi: \mathfrak{S}_> \rightarrow \mathfrak{S}(n+r,n-r-1)$ which preserves the major index. Given a path $\pi \in \mathfrak{S}_{>}$, let $R$ be the right-most highest point on $\pi$. Since $\pi$ has height strictly greater than $2r-1$ and ends at level $2r-1$, the point $R$ is not the last point on $\pi$. Let $RL$ be the \textit{down} step following $R$. Define $\psi(\pi)$ to be the path obtained from $\pi$ by changing the \textit{down} step $RL$ into an \textit{up} step. Note that $\psi(\pi) \in \mathfrak{S}(n+r,n-r-1)$  and $L$ is the left-most highest point on $\psi(\pi)$.
 To see that $\psi$ is a bijection from $\mathfrak{S}_{>}$ to $\mathfrak{S}(n+r,n-r-1)$, given a path $\rho$ in $\mathfrak{S}(n+r,n-r-1)$, locate the left-most highest point $L$ on $\rho$ and change the \textit{up} step preceding it into a \textit{down} step to obtain $\pi$. Since $\rho$ has height at least $2r+1$, the path  $\pi$ will have height at least $2r$. Therefore $\pi \in  \mathfrak{S}_>$ and $\psi(\pi) = \rho$. 
 The bijection $\psi$ preserves the major index because the descent sets of $\pi$ and $\psi(\pi)$ are the same.
  It follows that
 
 \[\Big[\big.^{2n-1}_{n+r-1}\big.\Big]_q - \Big[\big.^{2n-1}_{n+r}\big.\Big]_q = \sum_{\pi \in \mathfrak{S}(n+r-1,n-r)} q^{\maj(\pi)} - \sum_{\pi \in \mathfrak{S}(n+r,n-r-1)}q^{\maj(\pi)} = \sum_{\pi \in \mathfrak{S}_{\leq}} q^{\maj(\pi)}.\]

 We will define a bijection $\varphi: \mathfrak{S}_{\leq} \rightarrow \setB$. Let $\pi \in \mathfrak{S}_{\leq}$. Define $\varphi(\pi)$ to be the path obtained from $\pi$ by reflecting $\pi$ across the $x$-axis and
 then adding an \textit{up} step to the beginning of the path. This is clearly a bijection from $\mathfrak{S}_{\leq}$ to $\setB$. By Lemma \ref{reflect}, reflecting $\pi$ across the $x$-axis causes the major index to decrease by $n-r$. Adding an \textit{up} step to the beginning of the path increases all descents by $1$, hence the major index of the reflection of $\pi$ equals the major minus descent index of $\varphi(\pi)$. It follows that, 
 
\[\frac{1}{q^{n-r}}\left( \Big[\big.^{2n-1}_{n+r-1}\big.\Big]_q - \Big[\big.^{2n-1}_{n+r}\big.\Big]_q \right)   = \sum_{\pi \in \mathfrak{S}_{\leq}} q^{\maj(\pi) - (n-r)} = \sum_{\pi \in \setB} q^{\maj(\pi) - \des(\pi)}.\]\end{proof}

The following is a generalization of Eq. 2 in \cite{art1}. The proof is algebraic and can be found in \cite{thesis}.

\begin{equation}
\label{qBallotIdEq}
q^{(n-1)(m-1)} T_q(m,n) = \sum_{r=1}^m (-1)^{r-1}q^{{r-1 \choose 2}} \frac{1+q^m}{1+q^r}B_q(n,r)B_q(m,r)
\end{equation}

\section{Combinatorial Interpretation}

When m=2, Eq. \ref{qBallotIdEq} becomes

 \begin{equation} \label{id1} q^{n-1}T_q(2,n)= (1+q^2)B_q(n,1) - B_q(n,2).  \end{equation}

For a path $\pi \in \mathcal{C}_n$, let $X$ be the last, from left to right, level one point up to and including the right-most maximum $R$ on $\pi$. Let $h_{-}(\pi)$ denote the maximum level that the path $\pi$ reaches from its beginning until and including point $X$, and $h_{+}(\pi)$ denote the maximum level that the path $\pi$ reaches after and including point $X$. Obviously $h_-(\pi) \leq h_+(\pi) = h(\pi)$.  Let $\Omega_n$ denote the set of $\pi \in \mathcal{C}_n$ such that $h_{+}(\pi) \leq h_{-}(\pi) + 2$.

\begin{theorem}
 \begin{equation}
  T_q(2,n) = q^{n-1} +  q^{3-n}\sum_{\pi \in \Omega_n} q^{\maj(\pi) - \des(\pi)}. 
 \end{equation}

\end{theorem}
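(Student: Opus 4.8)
The plan is to start from the identity \eqref{id1}, namely $q^{n-1}T_q(2,n) = (1+q^2)B_q(n,1) - B_q(n,2)$, and translate each term into a generating function over lattice paths via Theorem~\ref{qballot}. Since $B_q(n,1)$ enumerates $\mathcal{C}_n$ and $B_q(n,2)$ enumerates $\setB[n,2]$ — paths of length $2n$ starting with an up step, ending at height $-2$, staying weakly above $y=-2$ — both by the statistic $\maj - \des$, the goal is to interpret $(1+q^2)B_q(n,1) - B_q(n,2)$ as a sum over $\mathcal{C}_n$ in which each path $\pi$ is weighted by $q^{\maj(\pi)-\des(\pi)}$ times a factor recording the relationship between $h_-(\pi)$ and $h_+(\pi)$. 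The natural first step is to set up a sign-reversing involution (or a direct bijection) on $\mathcal{C}_n$, guided by the decomposition of a Catalan path at the point $X$: write $\pi = \alpha\, U\, \beta$ where the up step $U$ is the step arriving at a level-one point, $\alpha$ is the portion before $X$ reaching max level $h_-(\pi)$, and the suffix reaches max level $h_+(\pi)$; the factor $(1+q^2)$ should account for paths with $h_+ \le h_-$ or $h_+ = h_- + 2$ (and partially for $h_+ = h_- + 1$), while $B_q(n,2)$ cancels the over-counted paths with $h_+(\pi)$ large.

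Concretely, I would reinterpret $B_q(n,2)$ by reflecting across the $x$-axis and stripping the initial up step, exactly as $\varphi$ does in the proof of Theorem~\ref{qballot} but now in reverse: this identifies $\setB[n,2]$ with paths in $\mathfrak{S}_{\le}$-type sets, and after reflection produces Catalan-like paths carrying $q^{\maj - (n-2)}$ or similar shifts — this is where the prefactor $q^{3-n}$ and the additive $q^{n-1}$ in the theorem must emerge. The term $q^{n-1}$ presumably corresponds to a single exceptional path (likely $\pi = 0^n 1^n$, the path going straight up then straight down, for which $h_-(\pi)=1$ is tiny and $h_+(\pi)=n$ is huge, so it lies outside $\Omega_n$ and is not cancelled by $B_q(n,2)$, leaving a lone monomial). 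I would verify its exponent directly: for $0^n1^n$ we have $D(\pi)=\{n\}$, so $\maj-\des = n-1$, matching. Then the remaining paths with $h_+(\pi) \le h_-(\pi)+2$ survive with weight $q^{\maj-\des}$ up to the global shift $q^{3-n}$, and all paths with $h_+(\pi) \ge h_-(\pi)+3$ must be shown to cancel between $(1+q^2)B_q(n,1)$ and $B_q(n,2)$.

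The main obstacle will be constructing the cancellation bijection cleanly: I expect to need a height-reduction map in the spirit of $\psi$ from the proof of Theorem~\ref{qballot} (find the left-most highest point, toggle the preceding step) applied to the suffix of $\pi$ after $X$, pairing a Catalan path whose $h_+$ exceeds $h_- + 2$ with a shorter-height object counted by $B_q(n,2)$, all while tracking that $\maj - \des$ shifts by exactly the right power of $q$ and that the factor $(1+q^2) = 1 + q^2$ correctly bookkeeps the two ``extra'' levels $h_-+1$ and $h_-+2$ that $\Omega_n$ is allowed to use. Care is needed because $\maj$ is sensitive to the positions of descents, not just their number, so any step toggle must be shown to preserve the descent set (as in Theorem~\ref{qballot}) or to change it in a controlled, $q$-trackable way; handling the boundary cases where $X = R$, or where $\pi$ never returns to level one after its maximum, will require separate small arguments. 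Once the bijection is in place, collecting terms and factoring out $q^{3-n}$ should finish the proof.
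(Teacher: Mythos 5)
Your starting point is the same as the paper's (Eq.~\ref{id1} combined with Theorem~\ref{qballot}), but beyond that what you have is a plan rather than a proof, and the plan is missing the key structural idea. The paper does not split the Catalan side according to the values of $h_-$ and $h_+$, as you propose; it splits the Ballot side: $\mathcal{B}(n,2)$ is divided into $\mathcal{B}^*(n,2)$ (paths that do not reach level $y=-1$ before their right-most maximum) and $\mathcal{B}^{**}(n,2)$, and then $B^*_q(n,2)$ is cancelled against one copy of $B_q(n,1)$ while $B^{**}_q(n,2)$ is cancelled against the $q^2B_q(n,1)$ copy. The first cancellation is a descent-set-preserving bijection $f$ (toggle the \textit{down} step following the right-most maximum) onto the Catalan paths of height at least $2$, leaving only the height-one path; the second is an injection $g$, defined by two cases according to the step following the first level $-1$ point, whose image is exactly $\mathcal{C}_n \setminus \Omega_n$ and which shifts $\maj-\des$ by exactly $2$. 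Your hope that the needed map will, ``as in Theorem~\ref{qballot},'' preserve the descent set cannot be realized in general: in the paper's Case~2 the descent set genuinely changes (e.g.\ $\maj$ drops by $3$ and $\des$ by $1$), and controlling this is exactly what forces the extra device of excising the maximal down wedge sequence and reinserting it after $N$. None of these constructions, nor the verification that the image of $g$ is precisely the complement of $\Omega_n$, appears in your proposal; you correctly identify them as ``the main obstacle,'' but they are the entire content of the theorem.

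Your one concrete claim is also incorrect: the uncancelled monomial does not come from $0^n1^n$. That path has $h_-=1$ and $h_+=n$, so for $n\ge 4$ it lies in $\mathcal{C}_n\setminus\Omega_n$ and is cancelled (it is in the image of $g$, and on the other side it is of height $\ge 2$, hence in the image of $f$). The path that survives is the height-one zigzag $(01)^n$, with $\maj-\des=(n-1)^2$, so the leftover term is $q^{(n-1)^2}$ at the level of $q^{n-1}T_q(2,n)$, i.e.\ $q^{(n-1)(n-2)}$ in $T_q(2,n)$. (The printed $q^{n-1}$ in the statement appears to be a misprint: at $n=2$ one computes $T_q(2,2)=1+q+q^2$, which agrees with $q^{(n-1)(n-2)}+q^{3-n}\sum_{\pi\in\Omega_n}q^{\maj(\pi)-\des(\pi)}$ but not with $q^{n-1}+q^{3-n}\sum_{\pi\in\Omega_n}q^{\maj(\pi)-\des(\pi)}$.) So your verification that $0^n1^n$ has $\maj-\des=n-1$ matches the misprinted exponent, not the combinatorics of the cancellation.
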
\begin{proof}
By $\mathcal{B}^*(n,2)$ we denote the set of paths in $\mathcal{B}(n,2)$ which do not attain level $y=-1$ before their right-most maximum. Let $\mathcal{B}^{**}(n,2)=\mathcal{B}(n,2)-\mathcal{B}^*(n,2)$ and 

$$B^*_q(n,2)=\sum_{\pi \in \mathcal{B}^*(n,2)} q^{\maj(\pi) - \des(\pi)}; \,\,\,\,\,\,\,\,\ B^{**}_q(n,2)=\sum_{\pi \in \mathcal{B}^{**}(n,2)} q^{\maj(\pi) - \des(\pi)}.$$

By Theorem \ref{qballot} and Eq. \ref{id1}
$$q^{n-1}T_q(2,n) = (B_q(n,1) - B^*_q(n,2))+(q^2B_q(n,1)- B^{**}_q(n,2)).$$

First we compute $B_q(n,1) - B^*_q(n,2).$ For $\pi \in \mathcal{B}^*(n,2)$, let $RQ$ be the {\it down} step that follows the right-most maximum point $R$ of $\pi$. We define $f(\pi)$ to be the path obtained by substituting the {\it down} step $RQ$ by an {\it up} step. See Figure~\ref{Mn+1new}. Note that $f(\pi) \in \mathcal{C}_n$ and, since at least two {\it up} steps precede $Q$ on $f(\pi)$, the height of $f(\pi)$ is at least two. Also $\pi$ and $f(\pi)$ have the same set of descents, thus $\des(\pi)=\des(f(\pi))$ and $\maj(\pi)=\maj(f(\pi))$. It is important to mention that $Q$ is the left-most maximum on $f(\pi)$.

\begin{figure}[!ht]
\begin{center}
\includegraphics[scale=0.5]{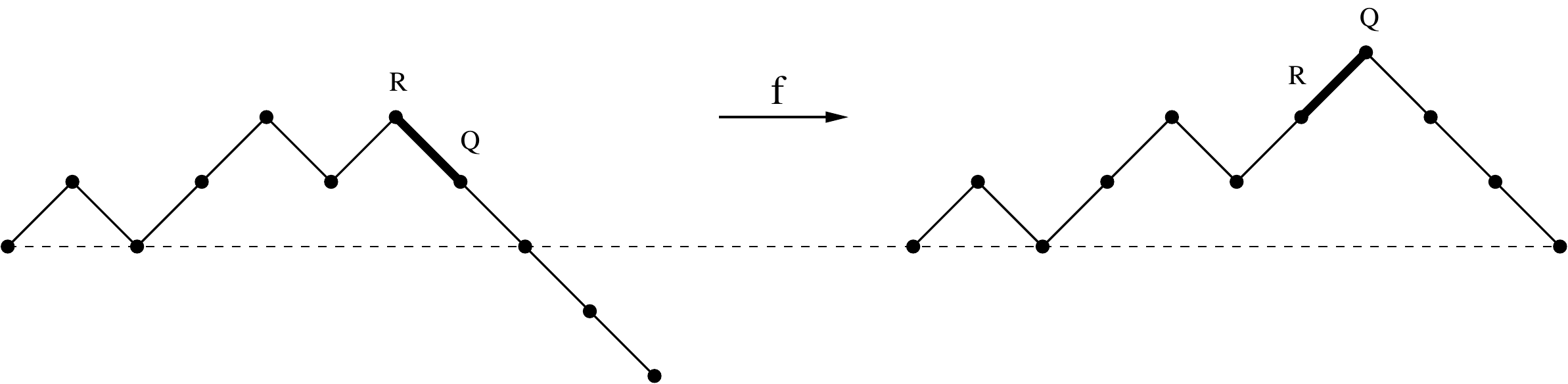}
\caption{$f$ substitutes the {\it down} step $RQ$ by an {\it up} step}
\label{Mn+1new}
\end{center}
\end{figure}

We will show that $f$ is a bijection between $\mathcal{B}^*(n,2)$ and the set of paths $\rho$ in $\mathcal{C}_n$ of height $h(\rho)>1$. Let $Q$ be the left-most maximum on $\rho$ and $RQ$ be the {\it up} step that precedes $Q$. Substitute the {\it up} step $RQ$ by a {\it down} step, which makes $R$ the right-most maximum of the resulting path, call it $\pi$. Note that $\pi \in \mathcal{B}^*(n,2)$ and $f(\pi)=\rho$.

It follows that $$B_q(n,1) - B^*_q(n,2)=\sum_{\substack{\pi \in \mathcal{C}_n \\ h(\pi)=1}} q^{\maj(\pi) - \des(\pi)}=q^{(n-1)^2}.$$

We define a {\it descent point} to be a point on a path which is preceded by a \textit{down} step, and followed by a \textit{up} step.
We define a {\it down wedge sequence} to be a portion of a path that starts with a {\it down} step, alternates between {\it down} steps and {\it up} steps, and ends with an {\it up} step. See Figure~\ref{Seq}.

\begin{figure}[!ht]
\begin{center}
\includegraphics[scale=0.6]{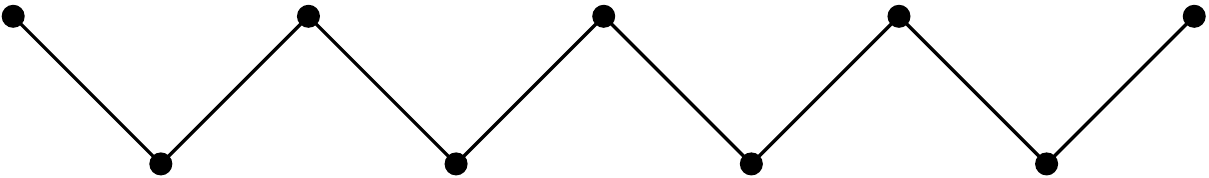}
\caption{A down wedge sequence}
\label{Seq}
\end{center}
\end{figure}

We find a combinatorial interpretation for $q^2B_q(n,1) - B^{**}_q(n,2)$ by establishing an injection $g$ from $\mathcal{B}^{**}(n,2)$ to $\mathcal{C}_n$. A path $\pi$ in $\mathcal{B}^{**}(n,2)$ attains $y=-1$ before its right-most maximum point $R$. Let $N$ be the first point before $R$ at which $\pi$ attains $y=-1$. We consider two cases: when $N$ is immediately followed by a {\it down} step, and when $N$ is immediately followed by an {\it up} step.

{\bf Case one: $N$ is immediately followed by a {\it down} step.} Since $N$ is the left-most point on $\pi$ on level $y=-1$, $N$ is preceded by two {\it down} steps and is followed by one {\it down} step, then one {\it up} step. See Figure~\ref{M1new}. Let $MN$ be the {\it down} step that precedes $N$ and $NY$ be the {\it down} step that follows $N$. Substitute $MY$ by two {\it up} steps. The resulting path is a ballot path of length $2n$ that ends at level two. Rename $N$ to be $X$. From left to right, $X$ is the last level one point on this ballot path. The maximum level that this ballot path reaches up to and including point $X$ is less than the maximum level it reaches after and including point $X$ by at least 4.

Let $L$ be the left-most maximum point of this ballot path and $QL$ be the {\it up} step that precedes $L$. Substitute the {\it up} step $QL$ by a {\it down} step. See Figure ~\ref{M1new}. The resulting path $g(\pi)$ is in $\mathcal{C}_n$ and $Q$ is its right-most maximum. Point $X$ is the last level one point on $g(\pi)$ before its right-most maximum $Q$ and the point on $g(\pi)$ before $X$ is a descent. Note that $h_+(g(\pi)) \geq h_-(g(\pi)) + 3$. Also $\des(\pi)=\des(g(\pi))$ and $\maj(\pi)=\maj(g(\pi))+2$. Thus $\maj(\pi)-\des(\pi)=\maj(g(\pi))-\des(g(\pi))+2.$

\begin{figure}[!ht]
\begin{center}
\includegraphics[scale=0.6]{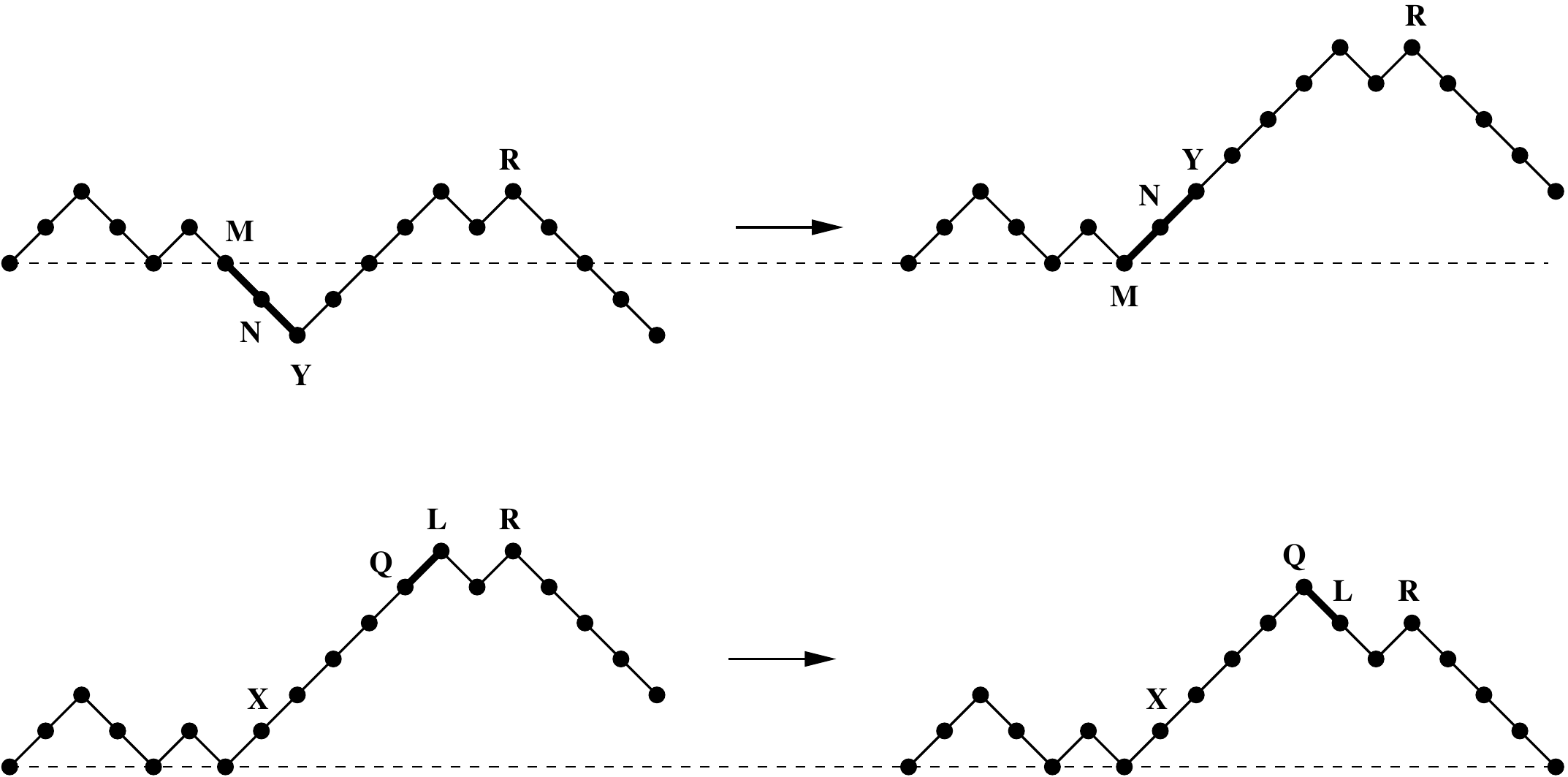}
\caption{The action of $g$ when $N$ is followed by a {\it down} step}
\label{M1new}
\end{center}
\end{figure}

{\bf Case two: $N$ is immediately followed by an {\it up} step.} Since $N$ is the left-most point on $\pi$ on level $y=-1$, $N$ is preceded by two {\it down} steps which form a segment we denote by $XN$. See Figure \ref{fix}. Let $\sigma$ be the longest, possibly empty, down wedge sequence that precedes $X$. Let $Y$ be the first point of the sequence $\sigma$. Note that either $Y$ is the second point on $\pi$ or $Y$ is preceded by a {\it down}  step. Remove $\sigma$ from its original position and insert it immediately after $N$. Then substitute $XN$ by two {\it up} steps. The resulting path is a ballot path of length $2n$ that ends at level two. From left to right, $X$ is the last level one point on this ballot path. The maximum level that this ballot path reaches up to and including point $X$ is less than the maximum level it reaches after and including point $X$ by at least 4.

Let $L$ be the left-most maximum point of this ballot path and $QL$ be the {\it up} step that precedes $L$. Substitute the {\it up} step $QL$ by a {\it down} step. See Figure ~\ref{fix}. The resulting path $g(\pi)$ is in $\mathcal{C}_n$ and $Q$ is its right-most maximum. Note that $X$ is the last level one point on $g(\pi)$ before its right-most maximum $Q$ and the point on $g(\pi)$ before $X$ is NOT a descent.  Also $h_+(g(\pi)) \geq h_-(g(\pi)) + 3$. If $Y$ is the second point on the original path $\pi$, then $g$ removes the original descent of $\pi$ that occurs immediately after $Y$ and moves the descent that originally corresponds to $N$ one unit to the left. Thus $\des(\pi)=\des(g(\pi))+1$ and $\maj(\pi)=\maj(g(\pi))+3$. If $Y$ is NOT the second point on the original path $\pi$ and $\sigma$ is not empty, then $g$ moves the descent that originally occurs immediately after $Y$ and the one that corresponds to $N$ one unit to the left. If $Y$ is NOT the second point on the original path $\pi$ and $\sigma$ is empty, then $g$ moves the descent that corresponds to $N$ two units to the left. Thus $\des(\pi)=\des(g(\pi))$ and $\maj(\pi)=\maj(g(\pi))+2$. In all the cases $\maj(\pi)-\des(\pi)=\maj(g(\pi))-\des(g(\pi))+2.$ 

\begin{figure}[!ht]
\begin{center}
\includegraphics[scale=0.6]{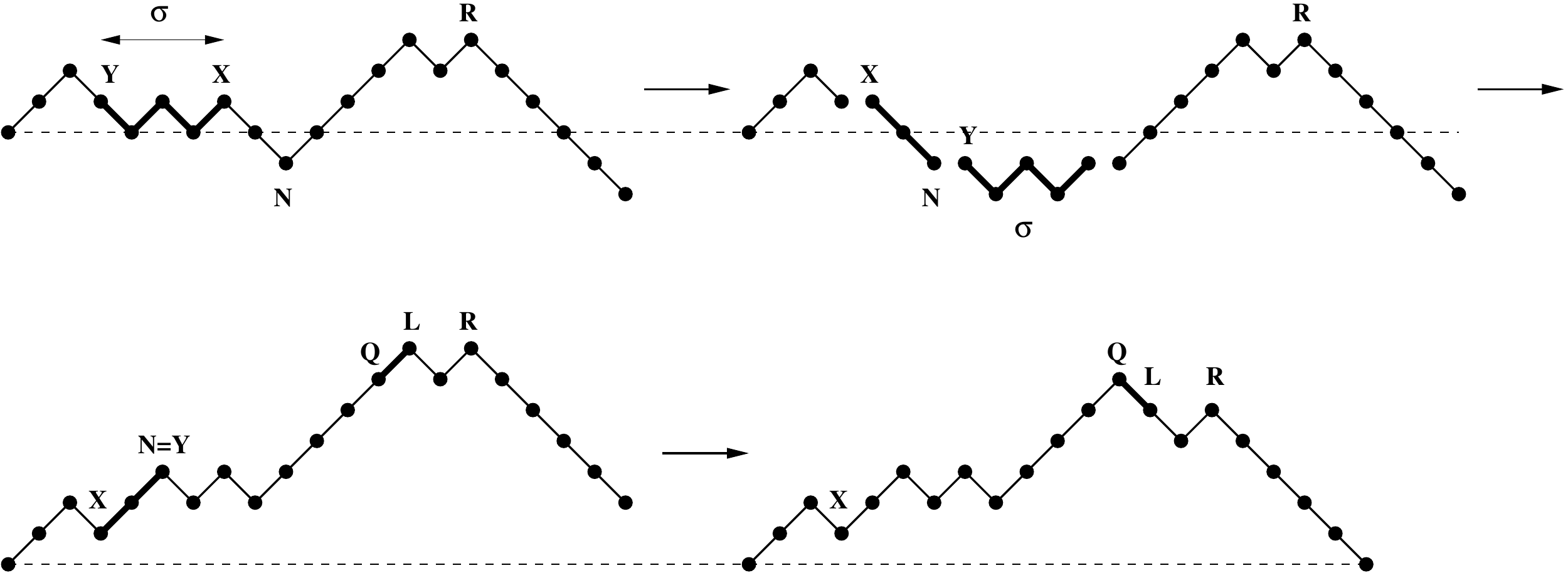}
\caption{The action of $g$ when $N$ is followed by an {\it up} step}
\label{fix}
\end{center}
\end{figure}

If a path $\rho$ is in the image of $g$, then $h_{+}(\rho) \geq h_{-}(\rho)+3$, thus $\rho \in \mathcal{C}_n-\Omega_n$. We will show that the image of $g$ is $\mathcal{C}_n-\Omega_n$. Let $\rho$ be in $\mathcal{C}_n$ and $h_{+}(\rho) \geq h_{-}(\rho)+3$. Let $Q$ be the right-most maximum on $\rho$ and $QL$ be the {\it down} step that follows $Q$. Substitute the {\it down} step $QL$ by an {\it up} step. The result is a ballot path of length $2n$ that ends at level two. Note that $L$ is the left-most maximum on this ballot path. Let $R$ denote the right-most maximum on this ballot path. From left to right, let $X$ be the last level one point on this ballot path. The maximum level that this ballot path reaches up to and including point $X$ is less than the maximum level it reaches after and including point $X$ by at least 4. We consider two cases: when the point before $X$ is a descent, and when the point before $X$ is not a descent.

If the point before $X$ is a descent, let $M$ be that descent. Let $Y$ be the point that follows $X$. Since $X$ is the last point on level one before $R$, $XY$ is an {\it up} step. Substitute $MY$ with two {\it down} steps. We will call the resulting path $\pi$. Note that $\pi$ attains level $y=-1$ before its right-most maximum $R$ and, immediately after attaining level $y=-1$ for the first time, it attains level $y=-2$. Thus $\pi$ is in $\mathcal{B}^{**}(n,2)$, falls into Case 1, and $g(\pi)=\rho$.

Next we consider the case when the point before $X$ is NOT a descent. Note that, since $X$ is the last level one point before $R$, $X$ is followed by two {\it up} steps. Let $XY$ be the segment that consists of these two {\it up} steps. Let $\sigma$ be the longest, possibly empty, down wedge sequence that starts at $Y$. Note that $\sigma$ is followed by an {\it up} step. Remove $\sigma$ from its original position and insert it immediately before $X$, then substitute $XY$ with two {\it down} steps. We will call the resulting path $\pi$. Note that $\pi$ attains level $y=-1$ for the first time at $Y$, before its right-most maximum $R$, and $Y$ is followed by an {\it up} step. Thus $\pi$ is in $\mathcal{B}^{**}(n,2)$, falls into Case 2, and $g(\pi)=\rho$.

It follows that $$q^2B_q(n,1) - B^{**}_q(n,2)=q^2 \, \sum_{\pi \in \Omega_n} q^{\maj(\pi) - \des(\pi)}. $$
\end{proof}

\section*{Acknowledgements}

We are very thankful to Ira Gessel and Bogdan Ion for their helpful comments. The problem about the combinatorial interpretation of the super Catalan Numbers was first mentioned to us by the late Herb Wilf, to whom we are immensely grateful.

% \section{Conclusion}
% 
% If we compare the set of paths $\Omega_n$ to the set of paths counted by Gessel's interpretation in \cite{GesselXin}, we see that they are the same set of paths. However, by avoiding an inclusion/exclusion based proof, and doing a direct subtraction instead, we were able to track the major minus descent index. 
% 
% When $m=3$, the identity and $q-$analog identity become
% 
% \[T(m,n) = 5B(n,1) - 4B(n,2) + B(n,3)\qquad  \text{ and}\]
% \[T_q(m,n) = (1+q^2+q^3+q^4+q^6)B_q(1,n) - (1+q + q^3 + q^4)B_q(2,n) + qB_q(3,n).\]


\begin{thebibliography}{1}

\bibitem{Emily}
E. Allen, A generalization of Catalan numbers, master's thesis, 2009.

\bibitem{thesis}
E. Allen, Combinatorial interpretations of generalizations of Catalan numbers and Ballot numbers, Ph.D. Thesis, 2014.

\bibitem{art1}
E. Allen and I. Gheorghiciuc, A weighted interpretation for the super Catalan numbers, arXiv:1403.5246.

\bibitem{cat}
E. Catalan, Question 1135, Nouvelles Annales de Math\'ematiques: Journal des Candidats aux \'Ecole Polytechnic et Normale, Series 2, 13 (1874), 207.

\bibitem{chapoton}
F. Chapoton and J. Zeng, A curious polynomial interpolation of Carlitz-Riordan's $q$-Ballot numbers, arXiv:1311.7228.

\bibitem{fh}
J. F\"urlinger and J. Hofbauer, $q$-Catalan numbers, J. Comb. Theory (A) 40 (1985), 248-264.

\bibitem{MacMahon}   
P. A. MacMahon, Combinatory Analysis, vol. 2, Cambridge Univ. Press, 1916. 

\bibitem{SuperBallot}
I. Gessel, Super ballot numbers, J. Symb. Comput. 14
   (1992), 179-194.  
   
\bibitem{GesselXin}
I. Gessel and  G. Xin, A combinatorial interpretation of the numbers $6(2n)!/n!(n+2)!$, J. Integer Seq. 8 (2005). 
  
\bibitem{warnaar}   
S. O. Warnaar and W. Zudilin, A $q$-rious positivity, Aequationes Math. 81 (2011), 177-183.

\end{thebibliography}
\end{document}